\documentclass[a4paper,12pt]{amsart}
\usepackage{amsmath}
\usepackage{amsmath}
\usepackage{amssymb}
\usepackage{amscd}
\usepackage{mathrsfs}
\usepackage[all]{xy}
\usepackage[dvipdfm]{graphicx}
\def\mathcal{\mathscr}
\everymath{\displaystyle}
\setlength{\topmargin}{0in}
\setlength{\oddsidemargin}{0in}
\setlength{\evensidemargin}{0in}
\setlength{\textwidth}{6.3in}
\setlength{\textheight}{9.15in}
\setlength{\footskip}{0.25in}
\setlength{\parskip}{3mm}

\newtheorem{thm}{Theorem}[section]
\newtheorem{lem}[thm]{Lemma}
\newtheorem{cor}[thm]{Corollary}

\theoremstyle{definition}

\newtheorem{rem}[thm]{Remark}

\newcommand{\mca}[1]{{\mathcal{#1}}}

\def\Z{{\mathbb Z}}

\def\R{{\mathbb R}}

\def\image{\text{\rm Im}\,}

\def\ep{\varepsilon} 
\def\ecc{\text{\rm ECC}}
\def\ech{\text{\rm ECH}}
\def\PD{\text{\rm PD}}
\def\supp{\text{\rm supp}\,}
\def\vol{\text{\rm vol}}

\begin{document}
\pagestyle{plain}
\thispagestyle{plain}

\title[Dense existence of periodic Reeb orbits and ECH spectral invariants]
{Dense existence of periodic Reeb orbits and ECH spectral invariants}

\author[Kei Irie]{Kei Irie}
\address{Research Institute for Mathematical Sciences, Kyoto University,
Kyoto 606-8502, Japan}
\email{iriek@kurims.kyoto-u.ac.jp}
\subjclass[2010]{70H12, 53D42} 
%\date{\today}

\begin{abstract}
In this paper, we prove (1): for any closed contact three-manifold with a $C^\infty$-generic contact form, 
the union of periodic Reeb orbits is dense, 
(2): for any closed surface with a $C^\infty$-generic Riemannian metric, 
the union of closed geodesics is dense. 
The key observation is $C^\infty$-closing lemma for 3D Reeb flows, 
which follows from the fact that 
the embedded contact homology (ECH) spectral invariants recover the volume. 
\end{abstract}

\maketitle

\section{Introduction}

For any contact manifold $(Y, \lambda)$, where $\lambda$ is the contact form, 
the Reeb vector field $R$ is defined by equations 
$d\lambda(R, \, \cdot \,)=0$ and 
$\lambda(R)=1$. 

The first result of this paper is that, 
for any closed contact three-manifold with a $C^\infty$-generic contact form, 
the union of periodic Reeb orbits is dense in the manifold. 
To state the result more formally, recall that 
a set $F$ in a topological space $X$ is called residual, 
if $F$ contains a countable intersection of open and dense sets in $X$. 

\begin{thm}\label{150704_1} 
For any closed contact three-manifold $(Y, \lambda)$, 
\[
\{ f \in C^\infty(Y, \R_{>0}) \mid \text{the union of periodic Reeb orbits of $(Y,f\lambda)$ is dense in $Y$} \} 
\]
is residual in $C^\infty(Y, \R_{>0})$ with respect to the $C^\infty$-topology. 
\end{thm}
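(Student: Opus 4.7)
My plan is a Baire category argument whose sole substantive input is the $C^\infty$-closing lemma for 3D Reeb flows advertised in the abstract. Since $C^\infty(Y,\R_{>0})$ in the $C^\infty$-topology is an open subset of a Fréchet space, hence a Baire space, it suffices to write the set in the theorem as a countable intersection of sets each containing an open dense subset. Fix a countable basis $\{U_n\}_{n\ge 1}$ for the topology of $Y$ and let
\[
G_n := \{\, f \in C^\infty(Y,\R_{>0}) : \text{some periodic Reeb orbit of } f\lambda \text{ meets } U_n \,\}.
\]
The union of periodic orbits is dense in $Y$ if and only if every $U_n$ is met by such an orbit, so the set in the statement equals $\bigcap_n G_n$, and it is enough to show that each $G_n$ has dense interior.

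To produce open subsets of $G_n$, I pass to the non-degenerate refinement
\[
H_n := \{\, f : \text{some non-degenerate periodic Reeb orbit of } f\lambda \text{ meets } U_n \,\} \subseteq G_n.
\]
A non-degenerate periodic orbit corresponds to a transverse fixed point of a local Poincaré return map, so by the implicit function theorem it persists, and varies continuously in $C^0$ Hausdorff distance, under $C^\infty$-small perturbations of $f$; since $U_n$ is open, the perturbed orbit still meets $U_n$, and $H_n$ is $C^\infty$-open.

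For density of $H_n$ I would feed the closing lemma into a genericity refinement. Given $f_0$ and a $C^\infty$-neighborhood $\mca{V}$ of $f_0$, the closing lemma produces $f_1 \in \mca{V}$ whose Reeb flow has some periodic orbit meeting $U_n$; a further small perturbation is then used to make that orbit non-degenerate and land in $H_n$. I expect this last adjustment to be the main obstacle: a generic perturbation enforcing non-degeneracy of all short orbits (a Kupka--Smale-type statement for Reeb flows) can in principle destroy the specific degenerate orbit that the closing lemma just produced. I would resolve this either by choosing free parameters in the explicit closing-lemma perturbation so that the new orbit is automatically non-degenerate, or, more softly, by iterating the closing lemma in a nested shrinking family of open subsets of $U_n$ with rapidly decreasing perturbation sizes and taking a limit in $\mca{V}$. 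Granting this, $H_n$ is open and dense, so $G_n \supseteq H_n$ contains an open dense set and Baire finishes the proof; all the real content of the theorem is concentrated in the ECH-based closing lemma.
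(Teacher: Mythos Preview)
Your Baire-category skeleton and the sets $H_n$ coincide exactly with the paper's argument (the paper writes $\mca{F}_U$ for your $H_n$). The only gap is the one you flag yourself: promoting the closing-lemma orbit to a nondegenerate one. The paper does not defer this to a separate Kupka--Smale step or to an iteration; it builds nondegeneracy into the statement of the closing lemma. Once the ECH volume argument has produced some $t\in[0,1]$ and a periodic Reeb orbit $\gamma$ of $(1+th)\lambda$ through $U$, one multiplies the contact form by $e^g$ for a $C^\infty$-small $g$ with $g|_{\image\gamma}\equiv 0$ and $dg|_{\image\gamma}\equiv 0$. Then $\gamma$ remains a Reeb orbit of $e^g(1+th)\lambda$, while its linearized Poincar\'e map is twisted by the Hessian of $g$ along $\gamma$; a generic choice of that Hessian makes $\gamma$ nondegenerate without moving it. This is the missing concrete mechanism---your first proposed fix (``choose free parameters so the new orbit is automatically nondegenerate'') is precisely this, once made explicit.

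Your second proposed fix, iterating the closing lemma on a nested shrinking family inside $U_n$ and passing to a limit, is not safe as stated: the ECH closing lemma comes with no a priori bound on the period of the orbit it produces, so the resulting sequence of periodic orbits need not subconverge to a periodic orbit of the limiting contact form, and the limit $f_\infty$ may fail to lie in $G_n$.
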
 

\begin{rem} 
For any closed contact manifold (not necessarily three-dimensional) with a $C^2$-generic contact from, 
the union of periodic Reeb orbits is dense; 
this is an easy consequence of the Hamiltonian $C^1$-closing lemma by Pugh-Robinson (see \cite{PR} Corollary 11.4). 
On the other hand, there exists an example of a Hamiltonian system whose $C^\infty$-small perturbations 
have no nonconstant periodic orbits (see \cite{Herman}). 
\end{rem} 

\begin{rem}\label{150828_1} 
There is a series of results 
establishing the existence of infinitely many periodic orbits for $C^\infty$-generic 
Hamiltonian/Reeb/geodesic flows. 
See \cite{GG} and the references therein. 
\end{rem} 

\begin{rem}\label{150714_1} 
In the preliminary version of this paper, we required additional assumptions in Theorem \ref{150704_1}, to directly apply Theorem 1.2 in \cite{CGHR}. 
It was pointed out by Michael Hutchings \cite{email} that we can drop these assumptions using Theorem 1.3 in \cite{CGHR} and the arguments in 
\cite{CGH} Section 2.6. 
\end{rem}

We also prove a similar result for closed geodesics (throughout this paper, all closed geodesics are assumed to be nonconstant). 
For any Riemannian metric $g$ on a manifold $\Sigma$ and $f \in C^\infty(\Sigma, \R_{>0})$, we define the metric $fg$ by 
$\| v \|_{fg}:= f(p) \| v \|_g \,(p \in \Sigma, v \in T_p\Sigma)$. 

\begin{thm}\label{150720_1} 
For any closed Riemannian surface $(\Sigma, g)$, 
\[
\{f \in C^\infty(\Sigma, \R_{>0}) \mid \text{the union of closed geodesics of $(\Sigma, fg)$ is dense in $\Sigma$} \}
\]
is residual in $C^\infty(\Sigma, \R_{>0})$ with respect to the $C^\infty$-topology. 
\end{thm}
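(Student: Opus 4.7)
The plan is to reduce Theorem \ref{150720_1} to the Reeb closing-lemma machinery underlying Theorem \ref{150704_1} by realising the geodesic flow on $\Sigma$ as a Reeb flow on the three-manifold $Y := S^*_g\Sigma$. Let $\pi: Y \to \Sigma$ be the bundle projection and $\lambda$ the restriction to $Y$ of the canonical Liouville 1-form on $T^*\Sigma$; then $(Y,\lambda)$ is a closed contact three-manifold whose Reeb flow is the geodesic flow of $g$. For any $f \in C^\infty(\Sigma,\R_{>0})$, the fibrewise rescaling $(p,\xi)\mapsto(p,f(p)\xi)$ identifies $(Y, \pi^*(f)\lambda)$ with $S^*_{fg}\Sigma$ equipped with its canonical contact form; in particular closed geodesics of $fg$ correspond via $\pi$ to periodic Reeb orbits of $\pi^*(f)\lambda$.

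I would then run a Baire-category argument. Fix a countable basis $\{V_j\}$ of open sets of $\Sigma$ and set
\[
M_j := \{f \in C^\infty(\Sigma,\R_{>0}) \mid \pi^*(f)\lambda \text{ admits a periodic Reeb orbit meeting } \pi^{-1}(V_j)\}.
\]
Each $M_j$ is open by the usual persistence of periodic Reeb orbits under $C^\infty$-small perturbations. Any $f \in \bigcap_j M_j$ has, for every $j$, a closed geodesic of $fg$ through $V_j$, so the union of closed geodesics is dense in $\Sigma$. It therefore suffices to show each $M_j$ is dense.

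For density, fix $f_0$ and $V = V_j$, choose $b \in C^\infty(\Sigma,\R_{\ge 0})\setminus\{0\}$ with $\supp b \subset V$, and for $t\in[0,\epsilon]$ consider $\lambda_t := \pi^*\!\bigl(f_0(1+tb)\bigr)\lambda$. A direct calculation using $\lambda\wedge\lambda=0$ gives
\[
\vol(Y,\lambda_t) = \int_Y \lambda_t \wedge d\lambda_t = \int_Y \pi^*\!\bigl(f_0^{\,2}(1+tb)^2\bigr)\,\lambda\wedge d\lambda,
\]
which is strictly increasing in $t$ by Fubini. Suppose, for contradiction, that no $\lambda_t$ with $t\in[0,\epsilon]$ admits a periodic orbit meeting $\pi^{-1}(V)$. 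Since $\lambda_t \equiv \lambda_0$ outside $\pi^{-1}(\supp b)\subset \pi^{-1}(V)$, the periodic orbits of $\lambda_t$ coincide set-theoretically with those of $\lambda_0$ and have identical $\lambda_t$- and $\lambda_0$-actions; hence each ECH spectral invariant $c_k(Y,\lambda_t)$ is independent of $t$. This contradicts the Cristofaro-Gardiner–Hutchings–Ramos Weyl law
\[
\lim_{k\to\infty}\frac{c_k(Y,\lambda_t)^2}{k} = 2\vol(Y,\lambda_t),
\]
whose right-hand side strictly increases with $t$. Hence some $f_t = f_0(1+tb)$ with $t\in(0,\epsilon]$ lies in $M_j$, and density follows by letting $\epsilon\to 0$.

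The main obstacle is that Theorem \ref{150704_1} alone does not immediately apply, since the pullback class $\{\pi^*(f)\lambda : f \in C^\infty(\Sigma,\R_{>0})\}$ is a thin subset of all contact forms on $Y$ and a residual subset of $C^\infty(Y,\R_{>0})$ need not meet it in anything generic. The remedy is to rerun the closing-lemma argument inside this restricted class, which works because the perturbing bump $\pi^*(b)$ is supported in $\pi^{-1}(V)$ and the volume $\vol(Y,\pi^*(f)\lambda)$ still depends nontrivially on $f$ (being a positive multiple of $\int_\Sigma f^2\,dA_g$). The delicate step, as in the proof of Theorem \ref{150704_1}, is the identification of ECH chain representatives as $t$ varies, which requires invoking the continuity of ECH spectral invariants together with the fact that all orbits in question live in the region where $\lambda_t$ and $\lambda_0$ agree.
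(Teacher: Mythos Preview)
Your approach is essentially the paper's: realise the geodesic flow as the Reeb flow on $Y=S^*_g\Sigma$, rerun the ECH-volume closing argument of Lemma~\ref{150704_4} with perturbations pulled back from $\Sigma$, and finish by Baire category. However, there is a genuine gap in the openness step. You assert that each $M_j$ is open ``by the usual persistence of periodic Reeb orbits under $C^\infty$-small perturbations'', but a \emph{degenerate} periodic orbit need not persist, so $M_j$ as you define it is not obviously open. The paper handles this by strengthening the closing lemma (Lemma~\ref{lem:150720}) to produce a \emph{nondegenerate} closed geodesic: once the volume argument yields some $t$ and some closed geodesic $\gamma$ through $U$, one makes a further $C^\infty$-small perturbation $h'$ with $h'|_{\gamma}=dh'|_{\gamma}=0$ that twists the Poincar\'e return map so that $\gamma$ becomes nondegenerate for $(1+h')(1+th)g$. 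The set $\{f:\text{there is a nondegenerate closed geodesic of }fg\text{ meeting }V_j\}$ is then open by the implicit function theorem, and the Baire argument goes through.

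There is a second gap in your constancy claim for the spectral invariants. You argue that since the periodic orbits and their actions are identical for all $\lambda_t$, ``each ECH spectral invariant $c_k(Y,\lambda_t)$ is independent of $t$''. But equality of action spectra alone does not force equality of spectral invariants: a priori $c_\sigma$ could jump between distinct values in the spectrum as $t$ varies. The paper's argument (inside Lemma~\ref{150704_4}) uses three ingredients: spectrality $c_\sigma(Y,\lambda_t)\in\mca{A}(Y,\lambda_t)_+=\mca{A}(Y,\lambda_0)_+$ from Lemma~\ref{150715_1}; the fact that $\mca{A}(Y,\lambda_0)_+$ is a closed \emph{null} set from Lemma~\ref{150706_2}; and $C^0$-continuity of $t\mapsto c_\sigma(Y,\lambda_t)$. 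A continuous map from $[0,\ep]$ into a closed null set is constant. Your final paragraph mentions continuity but not the null-set step, which is what actually rules out jumping.
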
 

\begin{rem} 
As is clear from the proof of Theorem \ref{150720_1} (see Section 4), one can also prove the following variant of Theorem \ref{150720_1}: 
for any closed surface $\Sigma$, let $\mca{G}(\Sigma)$ denote the space of all $C^\infty$-Riemannian metrics on $\Sigma$. 
Then, 
\[ 
\{ g \in \mca{G}(\Sigma) \mid \text{the union of closed geodesics of $(\Sigma, g)$ is dense in $\Sigma$} \}
\]
is residual in $\mca{G}(\Sigma)$ with respect to the $C^\infty$-topology. 
\end{rem} 

To prove these theorems, we use spectral invariants in the theory of embedded contact homology (ECH). 
After some preliminaries in Section 2, we prove Theorem \ref{150704_1} in Section 3, and Theorem \ref{150720_1} in Section 4. 
The key observation  is $C^\infty$-closing lemma for 3D Reeb flows (Lemma \ref{150704_4}), 
which follows from the fact that the ECH spectral invariants recover the volume \cite{CGHR}. 

\section{Preliminaries}

In Section 2.1, we prove a preliminary result (Lemma \ref{150706_2}) on action spectra of contact manifolds. 
In Sections 2.2 and 2.3, we recall the theory of ECH, in particular quantitative aspects, very briefly. 
For precise definitions and proofs, see \cite{Hutchings} and the references therein. 

\subsection{Action spectra of contact manifolds} 

First we introduce the following notations for arbitrary set $S \subset \R$. 
\begin{itemize}
\item
For any integer $m \ge 0$, we define $S_m \subset \R$ by 
\[
S_m:= \begin{cases}  \{0\} &(m=0) \\  \{s_1 + \cdots + s_m \mid s_1, \ldots, s_m \in S\}  &(m \ge 1). \end{cases}
\]
We also define $S_+:= \bigcup_{m \ge 0} S_m$. 
\item $S$ is of class CV, if there exists a $C^\infty$-manifold $X$ and $f \in C^\infty(X)$, such that 
$S$ is contained in the set of all critical values of $f$. 
\end{itemize} 

\begin{lem}\label{150710_1} 
If $S$ is of class CV, then $S_+$ is a null (Lebesgue measure zero) set. 
\end{lem}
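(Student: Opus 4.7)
The plan is to exhibit, for each $m \ge 1$, a smooth function on a suitable manifold whose critical values contain $S_m$, and then invoke Sard's theorem together with countable subadditivity of Lebesgue measure.

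By hypothesis, fix a $C^\infty$-manifold $X$ and $f \in C^\infty(X)$ such that every $s \in S$ is a critical value of $f$. For each integer $m \ge 1$, I would consider the product manifold $X^m$ and the smooth function
\[
f^{(m)} : X^m \to \R, \qquad f^{(m)}(x_1,\dots,x_m) := f(x_1) + \cdots + f(x_m).
\]
Since $df^{(m)}_{(x_1,\dots,x_m)} = df_{x_1} \oplus \cdots \oplus df_{x_m}$, a point $(x_1,\dots,x_m)$ is critical for $f^{(m)}$ if and only if each $x_i$ is critical for $f$. Given any $s_1,\dots,s_m \in S$, the hypothesis provides critical points $x_i \in X$ with $f(x_i)=s_i$, so $(x_1,\dots,x_m)$ is a critical point of $f^{(m)}$ with critical value $s_1+\cdots+s_m$. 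Hence $S_m$ is contained in the critical values of $f^{(m)}$.

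Applying Sard's theorem to $f^{(m)}: X^m \to \R$ shows that the set of critical values of $f^{(m)}$ has Lebesgue measure zero, hence $S_m$ is null for every $m \ge 1$. The set $S_0=\{0\}$ is trivially null. Since $S_+ = \bigcup_{m \ge 0} S_m$ is a countable union of null sets, it is itself null, which is exactly the statement of the lemma.

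There is no serious obstacle here; the only mild subtlety is to ensure that Sard's theorem applies to $f^{(m)}: X^m \to \R$, which requires $X$ (and hence $X^m$) to be second countable. Since this is standard in the smooth category (and one could in any case restrict to a connected component without losing generality, as $S$ is contained in the critical values on some component after a further countable decomposition), the argument goes through.
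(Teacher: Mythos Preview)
Your proof is correct and follows exactly the same approach as the paper: define $f^{(m)}$ on $X^m$, observe that $S_m$ lies in its critical values, apply Sard's theorem, and conclude by countable subadditivity. The only difference is that you have spelled out a few details (the form of $df^{(m)}$, the case $S_0$, second countability) that the paper leaves implicit.
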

\begin{proof}
Suppose $S$ is contained in the set of critical values of $f \in C^\infty(X)$.
Then, for each integer $m \ge 1$, $S_m$ is contained in the set of critical values of 
$X^{\times m} \to \R; (x_1, \ldots, x_m) \mapsto f(x_1)+ \cdots + f(x_m)$, 
thus $S_m$ is a null set by Sard's theorem. 
Therefore $S_+$ is also a null set. 
\end{proof} 

For any closed contact manifold $(Y,\lambda)$, we set 
\[
\mca{P}(Y,\lambda):= \{ \gamma: \R/T_\gamma \Z \to Y \mid T_\gamma>0, \,  \dot{\gamma}=R(\gamma) \}.
\]
We define $\mca{A}: \mca{P}(Y,\lambda) \to \R_{>0}$ by 
$\mca{A}(\gamma):= T_\gamma$.
The image of this map 
$\image \mca{A} =: \mca{A}(Y,\lambda) \subset \R_{>0}$ is called the action spectrum of $(Y,\lambda)$. 

\begin{lem}\label{150706_2} 
$\mca{A}(Y,\lambda)_+$ is a closed null set in $\R_{\ge 0}$. 
\end{lem}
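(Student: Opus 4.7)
The plan is to establish two properties of $\mathcal{A}(Y,\lambda)_+$ separately: closedness in $\R_{\geq 0}$ and nullness. Only the nullness will invoke Lemma \ref{150710_1}.

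For closedness, I would first record the standard fact that the periods of periodic Reeb orbits on a closed contact manifold admit a positive lower bound $T_0 > 0$, which follows from a Darboux-chart argument together with compactness of $Y$. Consequently $\mathcal{A}(Y,\lambda)_m \subseteq [mT_0, \infty)$ for every $m \geq 1$, so any bounded subinterval of $\R_{\geq 0}$ meets only finitely many of these sets nontrivially. Closedness of $\mathcal{A}(Y,\lambda)_+$ thereby reduces to closedness of each $\mathcal{A}(Y,\lambda)_m$, which by induction on $m$ (using that the sum of two closed subsets of $\R$ bounded below is closed) reduces to closedness of $\mathcal{A}(Y,\lambda)$ itself. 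For the latter, given $\gamma_n \in \mathcal{P}(Y,\lambda)$ with $T_{\gamma_n} \to T > 0$, reparametrize to $\tilde\gamma_n(u) := \gamma_n(T_{\gamma_n} u)$ on $[0,1]$; these curves are uniformly Lipschitz, so a subsequence converges in $C^0$ by Arzela-Ascoli, and the ODE $\tilde\gamma_n'(u) = T_{\gamma_n}R(\tilde\gamma_n(u))$ bootstraps this to $C^\infty$ convergence to a limit $\tilde\gamma$ with $\tilde\gamma'(u) = TR(\tilde\gamma(u))$, i.e.\ a periodic Reeb orbit of period $T$.

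For nullness, by Lemma \ref{150710_1} it suffices to show $\mathcal{A}(Y,\lambda)$ is of class CV. The driving identity is that, for any smooth family $\gamma_s : \R/\Z \to Y$ of reparameterized Reeb loops satisfying $\dot\gamma_s = T_s R(\gamma_s)$, setting $V_s := \partial_s \gamma_s$ one has
\[
\frac{d}{ds}\int_{\R/\Z}\gamma_s^*\lambda = \int_{\R/\Z}\gamma_s^*\bigl(d(\iota_{V_s}\lambda) + \iota_{V_s}d\lambda\bigr) = 0,
\]
since the first term integrates to zero over a loop and the second vanishes pointwise because $d\lambda(R,\cdot) = 0$. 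As $\int \gamma_s^*\lambda = T_s$, this says the period is constant along any smooth family of Reeb orbits. I would then consider the locus $G := \{(p,T) \in Y \times \R_{>0} : \phi^T(p) = p\}$, where $\phi^t$ is the time-$t$ map of the Reeb flow, and the function $f(p,T) := T$. The variational identity implies that wherever $G$ is a smooth submanifold, $f|_G$ is locally constant, so every point of $G$ is a critical point of $f|_G$ and $\mathcal{A}(Y,\lambda)$ is contained in the set of critical values of $f|_G$.

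The main obstacle is that $G$ need not be a globally smooth submanifold at highly degenerate periodic orbits. My plan to address this is to stratify $G$ (or an appropriate resolution of it) according to the rank of $d\phi^T - \mathrm{id}$ at the periodic point, yielding a disjoint union of smooth finite-dimensional manifolds on each of which the period pulls back to a smooth locally constant function; combining these pieces produces the required $X$ and $f$ realizing $\mathcal{A}(Y,\lambda)$ inside the set of critical values. Verifying that the stratification exhausts every period in $\mathcal{A}(Y,\lambda)$ — in particular that even the most degenerate orbits lie on a smooth stratum where the period is critical — is where the technical work lies; an alternative route, building $f$ from an action-type functional with a closing-up term and passing to a finite-dimensional reduction, runs into the dual difficulty that the natural such $f$ tends to register $0$ rather than $T$ as its critical value.
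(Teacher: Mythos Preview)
Your overall approach matches the paper's exactly: closedness from $\min\mathcal{A}(Y,\lambda) > 0$ together with closedness of $\mathcal{A}(Y,\lambda)$, and nullness from showing $\mathcal{A}(Y,\lambda)$ is of class CV and invoking Lemma~\ref{150710_1}. The paper gives no more detail than you do on closedness (it simply says ``it is easy to see''), and for the CV property it omits the argument entirely, referring to the analogous statement for Hamiltonian periodic orbits in \cite{Schwarz}, Lemma~3.8.

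The substantive gap in your proposal is the CV step. Your fixed-point set $G$ is a smooth submanifold exactly at nondegenerate orbits---indeed, the map $(p,T)\mapsto(p,\phi^T(p))$ is transverse to the diagonal in $Y\times Y$ precisely when $1\notin\mathrm{Spec}(d\phi^T|_{\xi_p})$---so at degenerate orbits one cannot simply restrict $f$ to $G$. The rank stratification you propose does not resolve this: constant-rank loci of a smooth family of linear maps are in general only locally closed subsets, not submanifolds, so you still lack a smooth $X$. The action-functional route you dismiss at the end is actually the correct one, and your worry that it ``registers $0$ rather than $T$'' stems from choosing the wrong functional. The contact action $\gamma\mapsto\int_{S^1}\gamma^*\lambda$ on the free loop space of $Y$ has reparametrized closed Reeb orbits as critical points with critical value equal to the period; the cancellation to zero occurs only if one instead uses the symplectization Hamiltonian $H=e^s$. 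A finite-dimensional reduction of this functional, in the spirit of \cite{Schwarz}, then exhibits $\mathcal{A}(Y,\lambda)$ inside the critical-value set of a smooth function on a finite-dimensional manifold.
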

\begin{proof}
Let us abbreviate $\mca{A}(Y,\lambda)$ as $\mca{A}$. 
It is easy to see that $\mca{A}$ is closed, and $\min \mca{A}>0$. 
Thus $\mca{A}_+$ is closed. 
On the other hand, we can show that $\mca{A}$ is of class CV. 
The proof is similar to the case of Hamiltonian periodic orbits (see \cite{Schwarz} Lemma 3.8) and omitted. 
Then, $\mca{A}_+$ is a null set by Lemma \ref{150710_1}. 
\end{proof} 

\subsection{Embedded contact homology} 

Let $(Y,\lambda)$ be a closed contact three-manifold. 
Let $\mca{P}_0(Y,\lambda) \subset \mca{P}(Y,\lambda)$ denote the set of simple periodic Reeb orbits. 
$\mca{P}(Y, \lambda)$ and $\mca{P}_0(Y,\lambda)$ admit natural $S^1$ actions, and we denote the quotients by 
$\bar{\mca{P}}(Y,\lambda)$ and 
$\bar{\mca{P}}_0(Y,\lambda)$. 
The map $\mca{A}$ descends to $\bar{\mca{P}}(Y,\lambda)$, which we also denote by $\mca{A}$. 

Let us assume that 
the contact form $\lambda$ is nondegenerate, i.e. 
for any $\gamma \in \mca{P}(Y,\lambda)$, 
$1$ is not an eigenvalue of the linearized Poincar\'{e} map of $\gamma$. 
Under this assumption, any periodic Reeb orbit of $(Y,\lambda)$ is either elliptic or hyperbolic. 

For any $\Gamma \in H_1(Y:\Z)$, 
let $\ecc(Y,\lambda, \Gamma)$ denote the $\Z/2\,$-vector space freely generated by 
finite sets of pairs $\alpha= \{(\alpha_i, m_i)\}$ called ECH generators, such that: 
\begin{itemize}
\item The $\alpha_i$ are distinct elements in $\bar{\mca{P}}_0(Y,\lambda)$, and the $m_i$ are positive integers.
\item $\sum_i m_i [\alpha_i] = \Gamma$ in $H_1(Y: \Z)$.
\item $m_i=1$ if $\alpha_i$ is hyperbolic.
\end{itemize}
When $\Gamma=0$, the empty set $\emptyset$ is an ECH generator. 
For any $L \in \R_{>0}$, $\ecc^L(Y,\lambda, \Gamma)$ denotes the subspace of $\ecc(Y, \lambda, \Gamma)$ which is generated by 
ECH generators $\alpha = \{(\alpha_i, m_i)\}$ such that 
$\mca{A}(\alpha):= \sum_i m_i \mca{A}(\alpha_i) < L$. 
Obviously, $\mca{A}(\emptyset)=0$. 

Let $\xi:= \ker \lambda$ be the contact distribution, 
and let $d \in \Z_{\ge 0}$ denote the divisibility of $c_1(\xi)+2\PD(\Gamma)$ in $H^2(Y: \Z)$ mod torsion. 
Then, $\ecc(Y, \lambda, \Gamma)$ is relatively $\Z/d\,$-graded. 
In particular, if $c_1(\xi) + 2\PD(\Gamma)$ is a torsion in $H^2(Y:\Z)$, 
$\ecc(Y, \lambda, \Gamma)$ is relatively $\Z\,$-graded. 
In the following, we denote $\ecc$ by $\ecc_*$, to specify the relative grading. 

\begin{rem}
For any oriented two-plane field $\xi$ on a closed oriented three-manifold $Y$, 
there exists $\Gamma \in H_1(Y: \Z)$ such that $c_1(\xi) + 2\PD(\Gamma)=0$ in $H^2(Y:\Z)$. 
\end{rem} 

To define a differential on $\ecc_*(Y,\lambda, \Gamma)$, 
we fix an almost complex structure $J$ on the symplectization $Y \times \R$, which satisfies several conditions (see \cite{Hutchings} Section 1.3). 
The differential $\partial_J$, which decreases the grading by $1$, is defined by counting the number of $J$-holomorphic currents with ECH indices $1$ modulo $\R$-translation. 
It is shown that $\partial_J^2=0$, and the homology group is denoted by $\ech_*(Y,\lambda, \Gamma, J)$. 

For any ECH generators $\alpha$ and $\beta$, 
one can show 
$\langle \partial_J \alpha, \beta \rangle \ne 0 \implies \mca{A}(\alpha) \ge \mca{A}(\beta)$
using Stokes' theorem. 
Thus, for each $L \in \R_{>0}$, $\ecc^L_*(Y,\lambda, \Gamma)$ is preserved by $\partial_J$. 
We denote the homology group by $\ech^L_*(Y,\lambda, \Gamma, J)$, and 
$i^L: \ech^L_*(Y,\lambda, \Gamma, J) \to \ech_*(Y,\lambda,\Gamma, J)$ denotes the map induced by the inclusion of chain complexes. 

Given two almost complex structures $J$ and $J'$, there exists a natural isomorphism 
$\ech_*(Y,\lambda,\Gamma, J) \cong \ech_*(Y,\lambda,\Gamma, J')$. 
Moreover, for each $L \in \R_{>0}$ there exists a natural isomorphism 
$\ech^L_*(Y,\lambda,\Gamma, J) \cong \ech^L_*(Y,\lambda, \Gamma, J')$ which preserves $i^L$. 
Therefore, invariants $\ech_*(Y, \lambda, \Gamma)$, $\ech^L_*(Y, \lambda, \Gamma)$ and $i^L$ are well-defined, 
independent of almost complex structures. 

If contact forms $\lambda$ and $\lambda'$ have the same contact distribution $\xi$, namely 
$\ker \lambda = \ker \lambda' = \xi$, there exists a natural isomorphism 
$\ech_*(Y, \lambda, \Gamma) \cong \ech_*(Y, \lambda', \Gamma)$. 
We identify the RHS and the LHS, and denote it as $\ech_*(Y, \xi, \Gamma)$. 
For each $L \in \R_{>0}$, the map 
$\ech^L_*(Y,\lambda, \Gamma) \to \ech_*(Y, \xi, \Gamma)$ is well-defined and also denoted by $i^L$. 

\subsection{ECH spectral invariants} 

Let $(Y,\lambda)$ be a closed contact three-manifold, and $\xi:= \ker \lambda$. 
For any $\sigma \in \ech_*(Y, \xi, \Gamma) \setminus \{0\}$, 
let us recall from \cite{Hutchings_Quantitative} the definition of $c_\sigma(Y, \lambda) \in \R_{\ge 0}$. 
When $\lambda$ is nondegenerate, it is defined as 
\[
c_\sigma(Y,\lambda):=  \inf \{ L \mid \sigma \in \image (i^L: \ech^L(Y,\lambda,\Gamma) \to \ech(Y,\xi,\Gamma)) \}. 
\]
In the general case, i.e. $\lambda$ can be degenerate, 
let us take a sequence $(f_j)_{j \ge 1}$ in $C^\infty(Y, \R_{>0})$ such that 
$\lim_{j \to \infty} \|f_j - 1\|_{C^0} = 0$ and $f_j \lambda$ is nondegenerate for every $j \ge 1$, 
then one defines 
$c_\sigma(Y, \lambda):= \lim_{j \to \infty} c_\sigma(Y, f_j\lambda)$. 
Let us call $c_\sigma$ the ECH spectral invariant of $\sigma$. 

The spectral invariants satisfy the following properties
(see \cite{CGH} Section 2.5):

(Monotonicity) For any $f \in C^\infty(Y, \R_{\ge 1})$, $c_\sigma(Y, f\lambda) \ge c_\sigma(Y,\lambda)$.

(Scaling) For any $a \in \R_{>0}$, $c_\sigma(Y, a\lambda) = a c_\sigma(Y,\lambda)$.

(Continuity) For any sequence $(f_j)_{j \ge 1}$ in $C^\infty(Y, \R_{>0})$ such that 
$\lim_{j \to \infty} \|f_j-1\|_{C^0}=0$, $\lim_{j \to \infty} c_\sigma(Y, f_j\lambda)= c_\sigma(Y,\lambda)$.

The next lemma shows that spectral invariants are ``action selectors''. 
A special case of this lemma is proved in \cite{CGH} Lemma 3.1 (a). 

\begin{lem}\label{150715_1}
For any $\sigma \in \ech_*(Y, \xi, \Gamma) \setminus \{0\}$, 
$c_\sigma(Y,\lambda) \in \mca{A}(Y,\lambda)_+$. 
\end{lem}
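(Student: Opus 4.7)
The plan is to prove the claim first when $\lambda$ is nondegenerate by a direct filtration argument, and then to deduce the general case by $C^\infty$-approximation together with a compactness argument for the resulting periodic orbit decompositions.

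Suppose first that $\lambda$ is nondegenerate. Then $\mca{A}(Y,\lambda) \subset \R_{>0}$ is closed and discrete (with $+\infty$ the only accumulation point), so $\mca{A}(Y,\lambda)_+ \subset \R_{\ge 0}$ is closed and discrete as well. I would observe that the chain complex $\ecc^L_*(Y,\lambda,\Gamma)$, and hence the image $\image(i^L)$, is locally constant in $L$ on the complement of $\mca{A}(Y,\lambda)_+$. Since every ECH generator has finite action, $\bigcup_{L > 0} \image(i^L) = \ech_*(Y,\xi,\Gamma)$, so $c_\sigma(Y,\lambda) < \infty$. If $c_\sigma(Y,\lambda)$ lay outside $\mca{A}(Y,\lambda)_+$, local constancy on an open neighborhood of this value would contradict the infimum definition; hence $c_\sigma(Y,\lambda) \in \mca{A}(Y,\lambda)_+$.

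For arbitrary $\lambda$, I would pick a sequence $(f_j)_{j \ge 1} \subset C^\infty(Y,\R_{>0})$ with $f_j \to 1$ in $C^\infty$ and each $f_j\lambda$ nondegenerate, and use the continuity property to get $c_j := c_\sigma(Y, f_j\lambda) \to c := c_\sigma(Y,\lambda)$. The nondegenerate case then yields $c_j = \sum_{i=1}^{m_j} \mca{A}(\gamma_{j,i})$ for some $\gamma_{j,i} \in \mca{P}(Y, f_j\lambda)$. The next step is to bound $m_j$ uniformly: because $R_{f_j\lambda} \to R_\lambda$ in $C^\infty$, the norms $\|R_{f_j\lambda}\|_{C^1}$ are uniformly bounded, and so (by a standard flowbox argument on the compact manifold $Y$) there is a uniform positive lower bound $\ep > 0$ on the minimal period of any periodic orbit of $R_{f_j\lambda}$. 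Combined with boundedness of $c_j$, this gives $m_j \le c_j/\ep$.

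After passing to a subsequence, $m_j = m$ is constant and $T_{j,i} := \mca{A}(\gamma_{j,i}) \to T_i \ge \ep$ for each $i$. Reparametrizing on $[0,1]$ by $\tilde\gamma_{j,i}(t) := \gamma_{j,i}(tT_{j,i})$, the curves satisfy $\frac{d\tilde\gamma_{j,i}}{dt} = T_{j,i} R_{f_j\lambda}(\tilde\gamma_{j,i})$; Arzelà-Ascoli combined with $C^\infty$-convergence of the right hand sides yields, along a further subsequence, a smooth limit $\tilde\gamma_i$ with $\frac{d\tilde\gamma_i}{dt} = T_i R_\lambda(\tilde\gamma_i)$ and $\tilde\gamma_i(0) = \tilde\gamma_i(1)$. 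Hence each $T_i$ lies in $\mca{A}(Y,\lambda)$, so $c = \sum_i T_i \in \mca{A}(Y,\lambda)_+$.

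The main obstacle I anticipate is the compactness step: one must simultaneously control the number $m_j$ of summands (through the systole lower bound for $C^1$-close Reeb fields) and the limits of the individual orbits $\gamma_{j,i}$ (through Arzelà-Ascoli), so that the algebraic decomposition supplied by the nondegenerate case survives passage to the limit. The filtration argument in the nondegenerate case is essentially formal once one knows $\mca{A}(Y,\lambda)_+$ is closed and discrete.
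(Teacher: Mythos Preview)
Your proposal is correct and follows essentially the same approach as the paper: a filtration argument in the nondegenerate case using closedness of $\mca{A}(Y,\lambda)_+$, then approximation by nondegenerate forms combined with a uniform systole lower bound and a compactness argument to pass to the limit. The only differences are cosmetic: the paper uses $C^1$-convergence of $f_j$ (which already suffices for Reeb vector field convergence) and states the compactness step more tersely, while you spell out the Arzel\`a--Ascoli argument explicitly.
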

\begin{proof}
First we assume that $\lambda$ is nondegenerate. 
Let us abbreviate $c_\sigma(Y,\lambda)$ by $c$, and suppose that $c \notin \mca{A}(Y, \lambda)_+$, in particular $c>0$. 
Since $\mca{A}(Y,\lambda)_+$ is closed (Lemma \ref{150706_2}), there exists $\ep \in (0, c)$ such that 
$[c-\ep, c+\ep] \cap \mca{A}(Y,\lambda)_+ = \emptyset$.
Then, $\ecc^{c-\ep}_*(Y,\lambda, \Gamma)=\ecc^{c+\ep}_*(Y,\lambda, \Gamma)$, therefore 
$\image i^{c-\ep} = \image i^{c+\ep}$. This contradicts the definition of $c$. 

Next we consider the case that $\lambda$ can be degenerate. 
Let us take a sequence $(f_j)_{j \ge 1}$ in $C^\infty(Y, \R_{>0})$ such that 
$\lim_{j \to \infty} \| f_j - 1\|_{C^1} =0$, and $f_j\lambda$ is nondegenerate for any $j$. 
($\| \cdot \|_{C^1}$ is defined by fixing local charts on $Y$.)
For each $j$, $c_\sigma(Y, f_j \lambda) \in \mca{A}(Y, f_j \lambda)_{m(j)}$ for some $m(j)$. 
Now $\sup_j m(j)<\infty$, since 
$\inf_j \min \mca{A}(Y, f_j \lambda)>0$. 
Hence, up to subsequence, we obtain 
$c_\sigma(Y,f_j\lambda)= a^1_j+ \cdots + a^m_j$, where $a^1_j, \ldots, a^m_j \in \mca{A}(Y, f_j\lambda)$, and 
$a^l_\infty:= \lim_{j \to \infty} a^l_j$ exists for any $1 \le l \le m$. 
The assumption $\lim_{j \to \infty} \| f_j-1 \|_{C^1}=0$ implies that $a^l_\infty \in \mca{A}(Y,\lambda)$ for any $l$, 
thus $c_\sigma(Y, \lambda)=a^1_\infty+ \cdots + a^m_\infty  \in \mca{A}(Y, \lambda)_m$.  
\end{proof}

Next we recall the following remarkable result from \cite{CGHR}, namely 
ECH spectral invariants recover the volume. 

\begin{thm}[Theorem 1.3 \cite{CGHR}]\label{150706_1} 
Let $(Y,\lambda)$ be any closed, connected contact three-manifold, 
$\xi = \ker \lambda$ be the contact distribution, and let $\Gamma \in H_1(Y:\Z)$. 
Suppose that $c_1(\xi) + 2\PD(\Gamma)$ is torsion in $H^2(Y:\Z)$, 
and let $I$ be an absolute $\Z$-grading of $\ech_*(Y,\xi,\Gamma)$. 
Let $(\sigma_k)_{k \ge 1}$ be a sequence of nonzero homogeneous classes in $\ech_*(Y, \xi, \Gamma)$ such that 
$\lim_{k \to \infty} I(\sigma_k)=\infty$. Then, 
\[
\lim_{k \to \infty} \frac{c_{\sigma_k}(Y,\lambda)^2}{I(\sigma_k)} = \int_Y \lambda \wedge d \lambda =: \vol(Y,\lambda).
\]
\end{thm}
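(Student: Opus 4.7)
The plan is to translate everything to Seiberg--Witten theory via Taubes' isomorphism $\ech_*(Y,\xi,\Gamma) \cong \widehat{HM}_{-*}(Y,\mathfrak{s}_\xi+\PD(\Gamma))$. Under this identification, the ECH spectral invariant $c_\sigma(Y,\lambda)$ is realized, in the large-perturbation limit $r \to \infty$, as a min-max critical value of the Chern--Simons--Dirac functional $\mca{L}_r$ on $\text{Spin}^c$ configurations perturbed by $r\lambda$, while the absolute grading $I(\sigma)$ is realized as a relative spectral flow of the family of perturbed Dirac operators $\{D_r\}_{r \ge 0}$. The task is thereby reduced to matching asymptotic statements inside Seiberg--Witten theory.

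First I would combine Taubes' SW $=$ Gr analysis with the concentration behavior of Seiberg--Witten solutions: as $r \to \infty$, critical points of $\mca{L}_r$ concentrate along unions of Reeb orbits, and the CSD value of such a concentrating solution representing $\sigma$ is approximately $\pi r \cdot c_\sigma(Y,\lambda)$, modulo a correction of order the spectral flow. Second, I would appeal to a local Weyl-type law for the Dirac operator twisted by $ir\lambda/2$, giving that the relative spectral flow of $\{D_r\}$ between $r=0$ and $r$ grows like $\frac{r^2}{4\pi} \vol(Y,\lambda)$ as $r \to \infty$. Finally, for each $\sigma_k$ I would choose the natural scale $r_k$ at which the Weyl asymptotic equals $I(\sigma_k)$, namely $r_k \sim 2\sqrt{\pi I(\sigma_k)/\vol(Y,\lambda)}$, and substitute into the CSD estimate; after rearrangement the $r_k$-factors combine to give the desired conclusion $c_{\sigma_k}(Y,\lambda)^2/I(\sigma_k) \to \vol(Y,\lambda)$.

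The hardest step is making this concentration analysis sharp and uniform in both $r$ and $\sigma$. One must show that the spectral-flow correction to $\mca{L}_r$ is genuinely subleading compared with the main $\pi r\, c_\sigma$ term, and one must identify the SW spectral flow with the ECH absolute grading with sufficient precision that the error terms do not swamp the Weyl asymptotic. Both ingredients require the full force of Taubes' long series on the SW $=$ Gr correspondence, uniform curvature bounds for SW solutions concentrating on Reeb orbits, and precise eigenvalue asymptotics for Dirac operators perturbed by large connections; together these constitute the technical core of \cite{CGHR} and are what make Theorem \ref{150706_1} a deep result rather than a routine consequence of known constructions.
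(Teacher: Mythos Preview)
The present paper does not prove this theorem at all: Theorem~\ref{150706_1} is simply quoted from \cite{CGHR} (it is their Theorem~1.3) and then used as a black box, via Corollary~\ref{150715_2}, in the proof of Lemma~\ref{150704_4}. There is therefore no ``paper's own proof'' to compare your attempt against.

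That said, your outline is a fair high-level summary of the strategy actually carried out in \cite{CGHR}: pass to Seiberg--Witten Floer theory via Taubes' isomorphism, identify $c_\sigma$ with a min--max value of the perturbed Chern--Simons--Dirac functional at large $r$, identify the absolute grading with a spectral flow of the family of perturbed Dirac operators, and match the two via a Weyl-type asymptotic for that spectral flow. You are also right that the substance lies in the uniform error control---showing that the topological/energy corrections to the CSD value are genuinely $o(r^2)$, and that the spectral-flow/grading identification is sharp enough to survive division by $I(\sigma_k)$. What you have written is a roadmap rather than a proof: none of the three hard estimates (the upper bound on $c_\sigma$ via holomorphic curve/energy arguments, the lower bound via the CSD functional and a priori bounds on Seiberg--Witten solutions, and the Dirac spectral asymptotics) is actually established here, and each occupies a substantial portion of \cite{CGHR}. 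For the purposes of the paper under review, however, no such argument is expected; the correct ``proof'' in this context is simply the citation.
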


\begin{cor}\label{150715_2} 
Let $\lambda$, $\lambda'$ be contact forms on a closed three-manifold $Y$ such that $\ker \lambda= \ker \lambda'= \xi$. 
Suppose that for any $\Gamma \in H_1(Y:\Z)$ such that $c_1(\xi)+ 2\PD(\Gamma)$ is torsion, and $\sigma \in \ech_*(Y, \xi, \Gamma) \setminus \{0\}$, there holds
$c_\sigma(Y, \lambda)= c_\sigma(Y, \lambda')$. 
Then, $\vol(Y,\lambda)= \vol(Y, \lambda')$. 
\end{cor}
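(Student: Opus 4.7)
The plan is to feed Theorem \ref{150706_1} into both contact forms simultaneously. Since $\ker\lambda=\ker\lambda'=\xi$, the group $\ech_*(Y,\xi,\Gamma)$ and its absolute $\Z$-grading $I$ (when defined) depend only on $\xi$ and $\Gamma$, not on the particular contact form. So the two Weyl-type asymptotic formulas will involve the \emph{same} classes $\sigma_k$ on both sides, and the hypothesis $c_{\sigma_k}(Y,\lambda)=c_{\sigma_k}(Y,\lambda')$ will then equate the two limits.

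First I would reduce to the case where $Y$ is connected. If $Y=Y_1\sqcup\cdots\sqcup Y_n$, then the volume and the hypothesis decompose over the components (one restricts $\Gamma$ to each $Y_i$ separately and notes that the spectral invariants of each component are computed from the ECH of that component), so the connected case implies the general one.

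Assuming $Y$ connected, I would use the remark in Section 2.2 to pick some $\Gamma\in H_1(Y;\Z)$ with $c_1(\xi)+2\PD(\Gamma)=0$; this is torsion (in fact zero), so by Theorem \ref{150706_1} the group $\ech_*(Y,\xi,\Gamma)$ is relatively $\Z$-graded, and I fix an absolute $\Z$-grading $I$. Next I would produce a sequence $(\sigma_k)_{k\ge 1}$ of nonzero homogeneous classes with $I(\sigma_k)\to\infty$: this exists by the well-known fact (via Taubes' isomorphism with Seiberg--Witten Floer cohomology) that $\ech_*(Y,\xi,\Gamma)$ is nonzero in arbitrarily large gradings for the distinguished $\Gamma$, and one may take, e.g., $U^{-k}$-preimages of a nonzero class or simply invoke the nontriviality of the monopole Floer spectrum in all sufficiently large gradings.

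With such a sequence in hand, Theorem \ref{150706_1} applied to $\lambda$ and to $\lambda'$ gives
\[
\vol(Y,\lambda)=\lim_{k\to\infty}\frac{c_{\sigma_k}(Y,\lambda)^2}{I(\sigma_k)},\qquad
\vol(Y,\lambda')=\lim_{k\to\infty}\frac{c_{\sigma_k}(Y,\lambda')^2}{I(\sigma_k)},
\]
and the hypothesis $c_{\sigma_k}(Y,\lambda)=c_{\sigma_k}(Y,\lambda')$ makes the two limits equal. There is essentially no obstacle: the content of the corollary is entirely Theorem \ref{150706_1}, and the only non-trivial input beyond quoting it is the existence of the sequence $(\sigma_k)$, which is standard from Taubes' theorem.
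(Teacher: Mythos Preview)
Your proposal is correct and follows essentially the same route as the paper: reduce to the connected case, invoke the existence of a sequence of nonzero homogeneous ECH classes with grading going to infinity (which the paper attributes to Corollary~2.2 in \cite{CGH} via Seiberg--Witten Floer cohomology), and then apply Theorem~\ref{150706_1} to both contact forms. The only cosmetic difference is that you explicitly select $\Gamma$ with $c_1(\xi)+2\PD(\Gamma)=0$ using the remark in Section~2.2, whereas the paper leaves this implicit.
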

\begin{proof}
We may assume that $Y$ is connected. 
It is known that there exists a sequence of nonzero homogeneous classes $(\sigma_k)_{k \ge 1}$ such that $I(\sigma_{k+1})= I(\sigma_k)+2$ for any $k \ge 1$. 
(This fact follows from the corresponding result in Seiberg-Witten Floer cohomology, see Corollary 2.2 in \cite{CGH}.)
Then, one can apply Theorem \ref{150706_1} to conclude $\vol(Y,\lambda)=\vol(Y,\lambda')$. 
\end{proof} 

\section{Proof of Theorem \ref{150704_1}}

The key observation is $C^\infty$-closing lemma for 3D Reeb flows (Lemma \ref{150704_4}). 
We fix local charts on $Y$, and define $\| f \|_{C^l}$ for any integer $l \ge 0$ and $f \in C^l(Y)$. 
For any $f \in C^\infty(Y)$, we set 
\[
\| f \|_{C^\infty}: = \sum_{l=0}^\infty 2^{-l} \frac{ \| f \|_{C^l}}{1 + \| f \|_{C^l}}.
\]

\begin{lem}\label{150704_4}
For any nonempty open set $U$ in $Y$ and $\ep>0$, there exists $f \in C^\infty(Y)$ such that
$\|f-1\|_{C^\infty}<\ep$ and 
there exists nondegenerate $\gamma \in \mca{P}(Y, f \lambda)$ which intersects $U$. 
\end{lem}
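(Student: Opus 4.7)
The plan is to use Corollary~\ref{150715_2} --- the fact that ECH spectral invariants determine the contact volume --- as a closing-lemma engine: a localized positive bump supported in $U$ strictly increases $\vol(Y,\lambda)$, which must therefore shift some spectral invariant, which by Lemma~\ref{150715_1} forces a new Reeb orbit to appear, and this new orbit is compelled to meet $U$.

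Concretely, I would fix a smooth $h \colon Y \to [0,1]$ with $\supp h \subset U$ and $h \not\equiv 0$, choose $\delta > 0$ small enough that $\|\delta h\|_{C^\infty} < \ep$, and study the family $\lambda_t := (1+th)\lambda$ for $t \in [0,\delta]$. A short computation (using $\lambda \wedge \lambda = 0$) gives $\lambda_t \wedge d\lambda_t = (1+th)^2\,\lambda \wedge d\lambda$, so $\vol(Y,\lambda_t)$ is strictly increasing in $t$. By the contrapositive of Corollary~\ref{150715_2} applied to $\lambda_0$ and $\lambda_\delta$, there then exist $\Gamma \in H_1(Y:\Z)$ with $c_1(\xi)+2\PD(\Gamma)$ torsion and $\sigma \in \ech_*(Y,\xi,\Gamma) \setminus \{0\}$ such that $c_\sigma(Y,\lambda_0) \neq c_\sigma(Y,\lambda_\delta)$. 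Setting $\phi(t) := c_\sigma(Y,\lambda_t)$, the Monotonicity, Scaling and Continuity properties make $\phi$ continuous and non-decreasing on $[0,\delta]$, with $\phi(0) < \phi(\delta)$.

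To extract the orbit, I would argue as follows. By Lemma~\ref{150706_2}, $\mca{A}(Y,\lambda)_+$ is a null set, while $\phi([0,\delta]) = [\phi(0),\phi(\delta)]$ has positive length, so I can pick $t_* \in [0,\delta]$ with $\phi(t_*) \notin \mca{A}(Y,\lambda)_+$. Lemma~\ref{150715_1} gives $\phi(t_*) \in \mca{A}(Y,\lambda_{t_*})_+$, so $\phi(t_*) = \mca{A}(\gamma_1) + \cdots + \mca{A}(\gamma_m)$ for some $\gamma_i \in \mca{P}(Y,\lambda_{t_*})$, and at least one of them, call it $\gamma$, has period $\mca{A}(\gamma) \notin \mca{A}(Y,\lambda)$. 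Because $\supp h \subset U$, $\lambda_{t_*}$ agrees with $\lambda$ on $Y \setminus U$; hence any closed orbit of $\lambda_{t_*}$ avoiding $U$ would automatically be a closed orbit of $\lambda$ of the same period, contradicting $\mca{A}(\gamma) \notin \mca{A}(Y,\lambda)$. Therefore $\gamma$ must meet $U$.

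The remaining step, which I expect to be the main obstacle, is upgrading $\gamma$ to be nondegenerate. My plan is to run the whole construction after replacing $\lambda$ by a nondegenerate $C^\infty$-close approximation (using density of nondegenerate contact forms together with the Continuity axiom for spectral invariants), and then, after producing $\gamma$, to apply a further $C^\infty$-small generic perturbation of the conformal factor $1+t_*h$ so that all closed Reeb orbits of action up to $\mca{A}(\gamma)$ become nondegenerate while at least one orbit through $U$ survives. Carrying out this last step cleanly requires a Sard--Kupka--Smale type argument controlling how the orbit through $U$ bifurcates under the secondary perturbation, and this is the principal technical point.
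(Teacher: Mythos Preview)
Your argument for producing an orbit through $U$ is correct and is essentially the paper's argument, phrased in direct rather than contrapositive form. The paper argues by contradiction: if no $(1+th)\lambda$-orbit meets $U$ for any $t\in[0,1]$, then $\mca{P}(Y,(1+th)\lambda)=\mca{P}(Y,\lambda)$ and hence $\mca{A}(Y,(1+th)\lambda)=\mca{A}(Y,\lambda)$ for all $t$, so every $c_\sigma(Y,(1+th)\lambda)$ is a continuous function of $t$ with values in the null set $\mca{A}(Y,\lambda)_+$, hence constant; Corollary~\ref{150715_2} then forces $\vol(Y,(1+h)\lambda)=\vol(Y,\lambda)$, a contradiction. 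Your version unwinds this to select a specific $t_*$ and a specific orbit, which works just as well.

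Where you diverge from the paper is the nondegeneracy step, and here you are making life much harder than necessary. The paper does \emph{not} replace $\lambda$ by a nondegenerate form beforehand, nor invoke any Kupka--Smale or bifurcation argument. Instead, once an orbit $\gamma$ through $U$ has been found for $(1+t_*h)\lambda$, one simply chooses $g\in C^\infty(Y)$ with $g|_{\image\gamma}\equiv 0$ and $dg|_{\image\gamma}\equiv 0$; then $\gamma$ remains a Reeb orbit of $e^g(1+t_*h)\lambda$, while its linearized Poincar\'e map is twisted by the Hessian of $g$ along $\gamma$. A generic such $g$ with $\|g\|_{C^\infty}$ arbitrarily small makes this \emph{single} orbit nondegenerate, and $f=e^g(1+t_*h)$ then satisfies the lemma. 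With this device your ``principal technical point'' disappears entirely.
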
 
\begin{proof} 
Let us take any $h \in C^\infty(Y, \R_{\ge 0})$ such that $\supp h \subset U$, $\| h\|_{C^\infty} < \ep$ and $h \not\equiv 0$. 
Obviously, $\vol(Y, (1+h)\lambda)>\vol(Y, \lambda)$. 
We are going to prove the following claim. 
\begin{quote}
\textbf{Claim:} 
There exist $t \in [0,1]$ and $\gamma \in \mca{P}(Y, (1+th)\lambda)$ which intersects $U$. 
\end{quote}
Once this claim is proved, we take $g \in C^\infty(Y)$ such that $\|g\|_{C^\infty}$ is sufficiently small, 
$g|_{\image \gamma}, dg|_{\image \gamma} \equiv 0$
(thus $\gamma \in \mca{P}(Y, e^g(1+th)\lambda)$), 
and $\gamma$ is nondegenerate as a periodic Reeb orbit with the contact form $e^g (1+th)\lambda$
(this is possible since the linearized Poincar\'{e} map of $\gamma$ is twisted by the Hessian of $g$). 
Then, $f:= e^g (1+th)$ satisfies the requirement in the lemma. 

Suppose that the claim does not hold, i.e. 
for any $t \in [0,1]$ and $\gamma \in \mca{P}(Y, (1+th)\lambda)$, 
$\gamma$ does not intersect $U$. 
Then $\mca{P}(Y, (1+th)\lambda)=\mca{P}(Y, \lambda)$ for any $t \in [0,1]$, 
since Reeb vector fields for $\lambda$ and $(1+th)\lambda$ coincide on $Y \setminus U$. 
Therefore $\mca{A}(Y, (1+th)\lambda) = \mca{A}(Y, \lambda)$ for any $t \in [0,1]$. 
Thus, for any $\Gamma \in H_1(Y:\Z)$ and $\sigma \in \ech_*(Y, \xi, \Gamma) \setminus \{0\}$, 
\[
c_\sigma(Y, (1+th)\lambda) \in \mca{A}(Y, (1+th)\lambda)_+ = \mca{A}(Y, \lambda)_+.
\]
On the other hand, $c_\sigma(Y, (1+th)\lambda)$ depends continuously on $t$. 
Since $\mca{A}(Y,\lambda)_+$ is a null set (Lemma \ref{150706_2}), 
this is a constant function of $t \in [0,1]$, thus 
$c_\sigma(Y, \lambda)= c_\sigma(Y, (1+h)\lambda)$
for any $\sigma \in \ech_*(Y, \xi, \Gamma) \setminus \{0\}$. 
Then, Corollary \ref{150715_2} shows that 
$\vol(Y,\lambda) = \vol(Y, (1+h)\lambda)$. 
This is a contradiction, thus the claim is proved. 
\end{proof} 

Let us prove Theorem \ref{150704_1}. 
For any nonempty open set $U \subset Y$, let 
\[
\mca{F}_U: = \{ f \in C^\infty (Y, \R_{>0}) \mid \text{there exists nondegenerate $\gamma \in \mca{P}(Y, f\lambda)$ which intersects $U$} \}.
\]
$\mca{F}_U$ is open in $C^\infty(Y, \R_{>0})$, and dense by Lemma \ref{150704_4}. 
Let us take a countable base $(U_i)_{i \ge 1}$ of open sets in $Y$. 
If $f \in \bigcap_{i \ge 1} \mca{F}_{U_i}$, 
then the union of periodic Reeb orbits of $(Y, f\lambda)$ is dense in $Y$. 
This completes the proof. 

\section{Proof of Theorem \ref{150720_1}}
It is enough to prove the next lemma. 

\begin{lem}\label{lem:150720} 
Let $(\Sigma, g)$ be a closed Riemannian surface. 
For any nonempty open set $U$ in $\Sigma$ and $\ep>0$, 
there exists $f \in C^\infty(\Sigma, \R_{>0})$ such that $\|f-1\|_{C^\infty} < \ep$, 
and there exists a nondegenerate closed geodesic $\gamma$ of $(\Sigma, fg)$ which intersects $U$.
\end{lem}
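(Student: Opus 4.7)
The plan is to reduce the lemma to the Reeb closing argument of Lemma \ref{150704_4} on the unit cotangent bundle. Let $Y := S^*_g\Sigma$, with the canonical contact form $\lambda$ (restriction of the tautological Liouville form) and projection $\pi : Y \to \Sigma$. Periodic Reeb orbits of $(Y, \lambda)$ are exactly arc-length parametrized closed geodesics of $(\Sigma, g)$. For $f \in C^\infty(\Sigma, \R_{>0})$, the fiberwise radial map $p \mapsto (f \circ \pi)^{1/2}(p)\,p$ identifies $S^*_{fg}\Sigma$ with $Y$ and pulls the canonical contact form of $fg$ back to $(f \circ \pi)^{1/2}\lambda$. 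Hence closed geodesics of $(\Sigma, fg)$ meeting $U$ correspond to periodic Reeb orbits of $(f \circ \pi)^{1/2}\lambda$ meeting $U' := \pi^{-1}(U)$.

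With this setup, I would mimic the proof of Lemma \ref{150704_4}, constrained to perturbations pulled back from $\Sigma$. Pick $h_0 \in C^\infty(\Sigma, \R_{\ge 0})$ with $\supp h_0 \subset U$, $h_0 \not\equiv 0$, and $\|h_0\|_{C^\infty}$ small, and set $\lambda_t := (1 + t h_0 \circ \pi)^{1/2}\lambda$ for $t \in [0,1]$. A direct calculation gives $\vol(Y, \lambda_t) = \int_Y (1 + t\, h_0 \circ \pi)\, \lambda \wedge d\lambda$, which is strictly increasing in $t$. If no $\lambda_t$ admitted a periodic Reeb orbit through $U'$, then all such orbits would lie in $Y \setminus U'$, where $1 + t h_0 \circ \pi$ and its differential vanish, so the Reeb vector field of $\lambda_t$ agrees with that of $\lambda$. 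Thus $\mca{A}(Y, \lambda_t) \subseteq \mca{A}(Y, \lambda)$, and Lemma \ref{150715_1} places $c_\sigma(Y, \lambda_t)$ in the null set $\mca{A}(Y, \lambda)_+$ (Lemma \ref{150706_2}) for every nonzero ECH class $\sigma$. Continuity in $t$ would force each such spectral invariant to be constant, and Corollary \ref{150715_2} would then give $\vol(Y, \lambda_0) = \vol(Y, \lambda_1)$, contradicting the strict monotonicity.

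This produces $t_0 \in [0,1]$ and a closed geodesic $\gamma$ of $(\Sigma, (1 + t_0 h_0)g)$ through $U$. To promote $\gamma$ to a nondegenerate closed geodesic, I apply a second conformal perturbation $e^{2\psi}$ with $\psi \in C^\infty(\Sigma)$ small satisfying $\psi|_{\image \gamma} \equiv 0$ and $d\psi|_{\image \gamma} \equiv 0$. The first-order vanishing leaves Christoffel symbols unchanged along $\gamma$, so $\gamma$ remains a unit-speed geodesic of $e^{2\psi}(1 + t_0 h_0)g$. The Hessian of $\psi$ along $\gamma$, and hence $\Delta_g\psi$ restricted to $\gamma$, may be prescribed arbitrarily (for instance by taking $\psi = \tfrac{1}{2}\phi(s) r^2 \chi(r)$ in Fermi coordinates $(s, r)$ around $\gamma$, for any desired $\phi : S^1 \to \R$ and cutoff $\chi$). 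By the two-dimensional conformal transformation law $K \mapsto e^{-2\psi}(K - \Delta_g\psi)$ for the Gauss curvature, this alters the coefficient in the Jacobi equation $y'' + K y = 0$ along $\gamma$, whose period-$T$ monodromy governs nondegeneracy. A generic such perturbation pushes the monodromy off the codimension-one locus where $1$ is an eigenvalue; then $f := e^{2\psi}(1 + t_0 h_0)$ satisfies the lemma.

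The main obstacle is the dual restriction to conformal perturbations by functions on $\Sigma$: both the closing step, where the ECH volume identity must still apply to the restricted one-parameter family $\lambda_t$, and the nondegeneracy step, where the perturbation must fix $\gamma$ pointwise as a geodesic, rely essentially on the dimension-two fact that a conformal change $g \mapsto e^{2\psi}g$ acts on the Gauss curvature through $\Delta_g\psi$.
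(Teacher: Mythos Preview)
Your argument is correct and follows essentially the same route as the paper: pass to the unit cotangent bundle, run the ECH closing argument of Lemma~\ref{150704_4} along a one-parameter conformal family, and then perturb conformally to make the resulting geodesic nondegenerate. Your treatment is in fact more explicit than the paper's on two points --- the identification of $(S^*_{fg}\Sigma,\lambda_\Sigma)$ with $(Y,(f\circ\pi)^{1/2}\lambda)$, and the Jacobi-equation mechanism by which a second-order conformal perturbation moves the monodromy --- whereas the paper simply invokes ``the same arguments as in Lemma~\ref{150704_4}'' and sketches the nondegeneracy step only for simple $\gamma$. Two minor remarks: in the sentence ``where $1 + t h_0\circ\pi$ and its differential vanish'' you mean that $h_0\circ\pi$ vanishes (so the conformal factor equals $1$ with zero differential there); and note that the paper's convention is $\|v\|_{fg}:=f\|v\|_g$, so in its notation the pulled-back contact form is $(f\circ\pi)\lambda$ rather than $(f\circ\pi)^{1/2}\lambda$, though this does not affect the argument.
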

\begin{proof} 
Let $T^*\Sigma:= \{(q,p)\mid q \in \Sigma, p \in T_q^*\Sigma\}$ be the cotangent bundle of $\Sigma$, 
\[
\pi: T^*\Sigma \to \Sigma; \, (q,p) \mapsto q  
\]
be the canonical projection map, and $\lambda_\Sigma$ be the canonical Liouville $1$-form on $T^*\Sigma$, i.e. 
\[
\lambda_\Sigma(v):= p (\pi_*(v)) \qquad ((q,p) \in T^*\Sigma, \, v \in T_{(q,p)}T^*\Sigma). 
\]
Also, let $Y_{(\Sigma,g)}:= \{ (q,p) \in T^*\Sigma \mid \|p\|_g=1\}$. 
Then, closed geodesics of  $(\Sigma, g)$ correspond to periodic Reeb orbits of $(Y_{(\Sigma,g)}, \lambda_\Sigma)$.

Let us take $h \in C^\infty(\Sigma, \R_{\ge 0})$ such that $\|h\|_{C^\infty}< \ep$, $\supp h \subset U$ and  $h \not\equiv 0$.  
Then, $\vol(Y_{(\Sigma,g)}, \lambda_\Sigma) < \vol(Y_{(\Sigma, (1+h)g)}, \lambda_\Sigma)$. 
By same arguments as in Lemma \ref{150704_4}, we can prove that 
there exist $t \in [0,1]$ and a closed geodesic $\gamma$ of  $(\Sigma, (1+th)g)$ which intersects $U$. 
Then, at least when $\gamma$ is a simple closed geodesic, 
there exists $h' \in C^\infty(\Sigma)$ such that $\|h'\|_{C^\infty}$ is sufficiently small, 
$h'|_{\image \gamma}, dh'|_{\image \gamma} \equiv 0$, 
and $\gamma$ is nondegenerate as a closed geodesic of  $(\Sigma, (1+h')(1+th)g)$. 
Then, $f:=(1+h')(1+th)$ satisfies the requirements in the lemma. 
\end{proof} 

\begin{rem} 
The closing problem for geodesic flows was already discussed in \cite{PR} Section 10 as an open problem 
(see also \cite{Rifford} Introduction). 
The $C^1$-closing problem was solved in \cite{Rifford} Corollary 4
as a consequence of \cite{Rifford} Theorem 3, 
which claims that for any unit tangent vector $(q,v)$ on a closed Riemannian manifold (of any dimension), 
one can create a periodic orbit of the geodesic flow which passes near $(q,v)$ by a $C^1$-small conformal perturbation of the metric. 
On the other hand, our Lemma \ref{lem:150720} shows that for any point $q$ on a closed Riemannian surface, 
one can create a closed geodesic which passes near $q$ by a $C^\infty$-small conformal perturbation of the metric. 
\end{rem} 

\textbf{Acknowledgements.} 
The author appreciates Michael Hutchings for his comments on the preliminary version of this paper, in particular those concerning Remark \ref{150714_1}. 
He also appreciates Kaoru Ono for his comments and encouragements on this work, 
and an anonymous referee for many helpful suggestions. 
This work is supported by JSPS KAKENHI Grant No. 25800041.

\end{document}